\numberwithin{equation}{section}
\numberwithin{equation}{section}
\theoremstyle{plain}
\newtheorem{theorem}{Theorem}[section]
\newtheorem{lemma}[theorem]{Lemma}
\newtheorem{prop}[theorem]{Proposition}
\newtheorem{cor}[theorem]{Corollary}
\title{There are Salem numbers
with trace $-3$ and every degree at least $34$}
\author{Giacomo Cherubini}
\address{
	Faculty of Mathematics and Physics, Department of Algebra,
	Charles University,
	Sokolov\-sk\' a 83, 18600 Praha~8,
	Czech Republic.
}
\address{
	Istituto Nazionale di Alta Matematica ``Francesco Severi'',
	Research Unit Dipartimento di Matematica ``Guido Castelnuovo'',
	Sapienza Universit\`a di Roma, Piazzale Aldo Moro 5, I-00185, Roma.
	\vspace{5pt}
}
\email{
	cherubini@karlin.mff.cuni.cz,
	cherubini@altamatematica.it
}
\author{Pavlo Yatsyna}
\address{
	Faculty of Mathematics and Physics, Department of Algebra,
	Charles University,
	Sokolov\-sk\' a 83, 18600 Praha~8,
	Czech Republic.
}
\address{
	Department of Mathematics and Systems Analysis,
	Aalto University,
	P.O. Box 11100, FI-00076,
	Finland.
	\vspace{5pt}
}
\email{
	p.yatsyna@matfyz.cuni.cz
}
\date{\today}
\subjclass[2020]{11R06; 11Y40, 11R09, 11C08}
\keywords{Salem number, trace, polynomial, interlacing, cyclotomic}
\begin{document}

	\begin{abstract}
		We prove that there exist Salem numbers with trace $-3$
		and every even degree $\geq 34$.
		Our proof combines a theoretical approach,
		which allows us to treat all sufficiently large degrees,
		with a numerical search for small degrees.
		Since it is known that there are no Salem numbers of trace $-3$
		and degree $\leq 30$, our result is optimal up to possibly the single
		value $32$, for which it is expected there are no such numbers.
	\end{abstract}

	\maketitle\thispagestyle{empty}

	\section{Introduction}

	A Salem number is a real algebraic integer greater than one
	with all its conjugates in the closed unit disc,
	and at least one of them on the unit circle.
This implies that a Salem number must have an even degree of at least $4$, it is conjugate to its inverse, and all its other conjugates must have absolute value one.

	There is abundant literature on Salem numbers; we refer to the excellent survey by Smyth
	\cite{S2} for an overview of results and applications on this topic.

	A simple example of a Salem number is the largest real root of
	\begin{equation}\label{0712:eq001}
	x^4 - nx^3 - (2n+1)x^2 - nx +1,
	\end{equation}
	for any natural number $n$ \cite[p.~35]{MS3}.
	This family shows that Salem numbers can have arbitrary positive trace
	(there is an example with trace zero and degree $6$, too).
	
	It is much more difficult to find Salem numbers with \emph{negative} trace. Indeed, it was not until 2005 that McKee and Smyth \cite{MS3} showed that such numbers exist for every trace. However, there is no family of a fixed degree giving all negative traces, such as the one described by \eqref{0712:eq001}. Instead, the degree has to increase as the trace decreases. For instance, when the trace is $-1$, the degree must be at least $8$, which is attained, as is in every even degree larger than that \cite{S1}.

	When the trace is $-2$, we need to jump to degree $20$
	to find a Salem number \cite{MS1}; then in degree $22$ there is none
	and after that every even degree $\geq 24$ is again attained~\cite{MY},
    see also \cite{CW}.
	Our main result concerns the existence of Salem numbers of trace $-3$.

	\begin{theorem}\label{intro:thm}
		For each even degree $d\ge 34$ there is Salem number of trace $-3$ and degree $d$.
	\end{theorem}

	In \cite[Corollary~2]{WWW}, Wang, Wu and Wu proved that there is
	no Salem number of trace $-3$ and degree $\leq 30$
	(this took more than six months of computations \cite[p.~2319]{WWW}).
	Therefore, Theorem \ref{intro:thm} is sharp up to possibly
	the single value $32$ missing from our classification.
	It has been suggested that Salem numbers of trace $-3$
	do not exist with such a degree \cite{EMRS}.
	
	In any event, we observe that, unlike the case where the trace $-2$, there is no gap in the list of even degrees for which Salem numbers of trace $-3$ exist, which is similar to the situation with $-1$. This raises the interesting question of whether this behaviour will repeat in the next few cases. In view of the work in \cite{WWW}, it is likely that even settling trace $-4$ may be a computationally challenging task. 
	
	We further point out that McKee in \cite[\S5.4]{M} found Salem numbers of trace $-3$ and degree $54$ (as well as one with degree $76$), and hinted that smaller degrees could be found. Our Theorem \ref{intro:thm} confirms this intuition, with the smallest known degree of Salem numbers of trace $-3$ being due to S.~El~Otmani, G.~Rhin, and J.-M. Sac-\'{E}p\'{e}e \cite{ERS}. For part of our proof, we apply McKee's method, as described in Section~\ref{S2.4}.

	Finally, we have a direct application of Theorem \ref{intro:thm} to totally positive
	algebraic integers (i.e., such that all their conjugates are positive).
	
	\begin{cor}\label{intro:cor}
		Totally positive algebraic integers with degree $d$ and trace $2d-3$ exist for every~$d\geq 16$.
	\end{cor}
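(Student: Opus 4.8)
The plan is to deduce Corollary~\ref{intro:cor} from Theorem~\ref{intro:thm} through the standard passage from a Salem number to a totally positive algebraic integer. Let $\tau$ be a Salem number of degree $2d$ and trace $-3$, so that its minimal polynomial is reciprocal with roots $\tau,\tau^{-1}$ together with $d-1$ conjugate pairs $e^{\pm i\theta_j}$ on the unit circle $(0<\theta_j<\pi)$. I would set
\[
	\gamma = \tau + \tau^{-1} + 2
\]
and claim that $\gamma$ is a totally positive algebraic integer of degree $d$ with $\operatorname{tr}(\gamma) = 2d-3$.

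First I would verify the three assertions. Since the automorphism $\tau\mapsto\tau^{-1}$ fixes $\tau+\tau^{-1}$, the element $\gamma$ lies in the proper subfield $\mathbb{Q}(\tau+\tau^{-1})\subset\mathbb{Q}(\tau)$; its conjugates are $\tau+\tau^{-1}+2$ and the $d-1$ numbers $2\cos\theta_j+2$, which are pairwise distinct because $\tau+\tau^{-1}>2>2\cos\theta_j$ and the $\theta_j$ are distinct, so $\gamma$ has degree exactly $d$. Total positivity is immediate: $\tau+\tau^{-1}+2>4$, while $2\cos\theta_j+2 = 2(1+\cos\theta_j)>0$, the strict inequality holding because $-1$ is not a root of the (irreducible) minimal polynomial, hence no $\theta_j=\pi$. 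Finally, summing the $d$ conjugates and regrouping gives
\[
	\operatorname{tr}(\gamma) = \Big(\tau+\tau^{-1}+\sum_{j=1}^{d-1}2\cos\theta_j\Big) + 2d = \operatorname{tr}(\tau) + 2d = 2d-3.
\]

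With this in hand the corollary is essentially a bookkeeping exercise. Theorem~\ref{intro:thm} produces a Salem number of trace $-3$ in every even degree $2d\ge 34$, that is, for every $d\ge 17$, and the construction above then yields a totally positive algebraic integer of degree $d$ and trace $2d-3$ for all $d\ge 17$. The only remaining value is $d=16$, which lies beyond the reach of Theorem~\ref{intro:thm} since it would require a Salem number of degree $32$. I expect this single case to be the main obstacle: it cannot be obtained from the theorem and must instead be supplied by exhibiting one explicit totally positive algebraic integer of degree $16$ and trace $29$, located by a direct numerical search. It is worth stressing that the nonexistence of Salem numbers of trace $-3$ and degree $\le 30$ is no obstruction here, because a totally positive integer of degree $16$ may well have several conjugates exceeding $4$, in which case the inverse transformation does not return a Salem number at all.
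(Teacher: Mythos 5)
Your proposal is correct and takes essentially the same route as the paper: the passage $\tau\mapsto\tau+\tau^{-1}+2$ (stated in the paper as the polynomial identity $P(x)=x^{\deg Q}Q(x+\tfrac{1}{x}+2)$) converts Theorem~\ref{intro:thm} into the cases $d\ge 17$, and the single remaining case $d=16$ needs a separate explicit example. The paper supplies that example by citing \cite{EMRS}, where degree-$16$ totally positive polynomials of trace $29$ were found by optimization methods---precisely the ``direct numerical search'' you anticipated---and your remark that such a polynomial need not come from a Salem number (so the nonexistence of Salem numbers of trace $-3$ and degree $\le 30$ is no obstruction) is exactly why this case is consistent with the known data.
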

	
	It is well known that if $P(x)$ is the minimal polynomial of a Salem number (a Salem polynomial, so to speak), then there exists an irreducible polynomial $Q \in \mathbb{Z}[x]$, of degree half the degree of $P$, such that $P(x)=x^{\deg Q}Q(x+\frac{1}{x}+2)$. Moreover, $Q$ has only real and positive roots. Consequently, Theorem \ref{intro:thm} directly implies Corollary \ref{intro:cor} for $d>16$, and a polynomial of degree $16$ and trace $29$ was found in \cite{EMRS}. Furthermore, Corollary \ref{intro:cor} is complete, since there is no totally positive algebraic integer with trace $2d-3$ and degree $<16$.
	
	\subsection*{Acknowledgements}
    G.C.~was supported by Czech Science Foundation GACR, grant 21-00420M and is currently a researcher at INdAM. 
	P.Y.~was supported by Charles University, projects PRIMUS/24/SCI/010 and UNCE/24/SCI/022, the Academy of Finland (grants 336005 and 351271, PI C. Hollanti), and by MATINE, Ministry of Defence of Finland (grant 2500M-0147, PI C. Hollanti).

	\section{Proof}

	Our proof of Theorem \ref{intro:thm} develops in four steps,
	each dealing with the existence of Salem numbers for different degrees.

	\emph{Step 1.}
	We start by constructing suitable infinite families of polynomials
	associated to seven-tuples of primes.
	A finite check on the initial choice of primes ensures that
	the polynomials in the family are irreducible and Salem.
	Our construction is similar to the one in \cite{MY}
	and makes use of interlacing polynomials.
	Furthermore, by selecting sufficiently many tuples,
	we can show that we obtain Salem polynomials 
	of trace $-3$ and every degree strictly greater than $80$
	(degrees $72$, $76$ and $78$ are also obtained).
	This is proved in Proposition \ref{prop:tuples}.

	\emph{Step 2.}
	For Salem numbers of degree $\leq 80$ (and different from $72, 76, 78$)
	we need to use different constructions.
	Our second step consists in adapting the construction of Step~1,
	using quadruples of primes, again interlacing polynomials
	and an auxiliary pair of degree-eight polynomials.
	This allows us to deal with the degrees $54,56,60,62,\dots,70,74$ and $80$,
	as demonstrated in Lemma \ref{2007:lemma1}.

	\emph{Step 3.}
	In degree $34,\dots, 40$, Salem polynomials have been shown to exist
	in \cite{ERS} using optimisation techniques, while in degree $32$ it is expected
	that there are no Salem numbers of trace $-3$ (see \cite[p.~22]{ERS}).
	This is reinforced by the numerical search in \cite{EMRS},
	where a similar method produced eleven totally positive polynomials
	of degree 16 and trace 29, none of which was Salem.

	\emph{Step 4.}
	Finally, for the degrees $42,\dots, 52$ and $58$, we do an extensive computer search
	(which took almost two weeks)
	applying a version of the method described in Sections~2.1 and~2.2 of~\cite{M}.
	The goal here is to construct monic integer polynomials that have only real roots
	and are in one-to-one correspondence with Salem polynomials (see details below).

	\subsection{Interlacing polynomials and seven-tuples of primes}\label{S2.1}

	We use the method from \cite{MS3,MS2} (specifically, Section~3 in~\cite{MS3})
	to construct Salem numbers.
	This is based on the use of cyclotomic polynomials and the notion of interlacing,
	which allows us to easily obtain Salem polynomials.
	
	We define a cyclotomic polynomial as any monic integer polynomial with all roots on the unit circle. In particular, this includes polynomials with multiple irreducible factors. We will use the notation $\Phi_n$ to denote the minimal polynomial of an $n$th root of unity. Given two coprime cyclotomic polynomials, $P$ and $Q\in\mathbb{Z}[x]$, we say that $P$ and $Q$ satisfy the circular interlacing condition if between any two roots of $P$, there is a root of $Q$ on the unit circle, and likewise, for any pair of roots of $Q$. It is easy to see that if $P$ and $Q$ satisfy the circular interlacing condition, then the degrees of both polynomials have to be equal, they cannot have repeated roots, and $x^2-1$ must divide $PQ$.

	A less obvious observation is that satisfying circular interlacing condition is preserved
	under addition of pairs, which is defined by considering each pair as a quotient.
	That is, given $Q_1,P_1$ and $Q_2,P_2$ satisfying the circular interlacing condition,
	by adding the respective fractions we get $\frac{Q_1}{P_1}+\frac{Q_2}{P_2}=\frac{Q}{P}$,
	where $P$ and $Q$ preserve the circular interlacing condition~\cite[Prop.~6]{MS3}.

	All the possible pairs (up to minor constraints) of polynomials satisfying the
	circular interlacing condition were classified in~\cite{MS2}.
	We restrict ourselves to a family given by
	\begin{equation}\label{def:pq}
	P(x)=\frac{(x^{m+n}-1)}{(x-1)}
	\quad\text{and}\quad
	Q(x)=\frac{(x^m-1)(x^n-1)}{(x-1)},
	\end{equation}
	where $\gcd(m,n)=1$. Our interest in pairs of this type
	derives from \cite[Prop.~5(a)]{MS3}, where it was shown that for
	$P$ and $Q$ as in \eqref{def:pq}, $(x^2-1)P(x)-xQ(x)$ is
	the minimal polynomial of a Salem number,
	possibly multiplied by a cyclotomic polynomial. In particular,
	if we take pairwise coprime integers $p_1,\dots,p_7,n\geq 2$ and define
	\[
	\frac{Q(x)}{P(x)}=
	\frac{x^{p_1+p_2}-1}{(x^{p_1}-1)(x^{p_2}-1)}
	+
	\frac{x^{p_3+p_4}-1}{(x^{p_3}-1)(x^{p_4}-1)}
	+
	\frac{x^{p_5+p_6}-1}{(x^{p_5}-1)(x^{p_6}-1)}
	+
	\frac{x^{p_7+n}-1}{(x^{p_7}-1)(x^n-1)},
	\]
	with $P$ and $Q$ relatively prime, and
	\begin{equation}\label{def:sevenprimes}
	f(x) = (x^2-1)P(x) - x Q(x),
	\end{equation}
	we have the following version of Lemma~5 from~\cite{MY}:
	
	\begin{lemma}\label{2009:lemma}
		Given pairwise coprime integers $p_1,\ldots,p_7,n\ge 2$,
		then the polynomial $f$ defined as in \eqref{def:sevenprimes} is either:
		\begin{itemize}
			\item the minimal polynomial of a Salem number of trace $-3$ and degree $n + p_1 + \cdots + p_7- 5$.
			\item a product of the minimal polynomial of a Salem number and a cyclotomic polynomial.
			\item a product of a quadratic polynomial with a root bounded by one, and a cyclotomic polynomial. 
		\end{itemize}
	\end{lemma}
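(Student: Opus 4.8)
The plan is to build $f$ from the additive structure on circularly interlacing pairs and then to extract its degree and trace from its two leading coefficients. First I would record that each of the four summands defining $Q/P$ is the quotient $P_k/Q_k$ of a building-block pair of the shape \eqref{def:pq}: for the successive pairs $(a_k,b_k)=(p_1,p_2),(p_3,p_4),(p_5,p_6),(p_7,n)$ set $P_k(x)=(x^{a_k+b_k}-1)/(x-1)$ and $Q_k(x)=(x^{a_k}-1)(x^{b_k}-1)/(x-1)$. Since $\gcd(a_k,b_k)=1$, the polynomials $x^{a_k}-1$ and $x^{b_k}-1$ meet only at $x=1$, so $P_k$ and $Q_k$ are coprime and, being of the form \eqref{def:pq}, satisfy the circular interlacing condition. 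By the additivity of the interlacing condition under addition of such quotients \cite[Prop.~6]{MS3}, the combined pair $(P,Q)$ obtained from $Q/P=\sum_k P_k/Q_k$ again interlaces on the unit circle, and by the Salem construction \cite[Prop.~5(a)]{MS3} the polynomial $f=(x^2-1)P-xQ$ of \eqref{def:sevenprimes} is the minimal polynomial of a Salem number up to a cyclotomic factor. The two degenerate outcomes of that construction---$f$ reducible but still carrying a genuine Salem factor, or $f$ collapsing so that its non-cyclotomic part is a quadratic with a root in the closed unit disc---give the second and third bullets.

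For the degree I would note that, because the exponents $p_1,\dots,p_7,n$ are pairwise coprime, the denominators $Q_k$ pairwise share only the simple root $x=1$; hence their least common multiple, which is $P$ once the fraction is in lowest terms (this is exactly the coprimality hypothesis on $P$ and $Q$), equals $\prod_k Q_k/(x-1)^3$. Thus $\deg P=\sum_k(a_k+b_k-1)-3=(p_1+\cdots+p_7+n)-7$, and since $\deg f=\deg P+2$ we obtain $\deg f=(p_1+\cdots+p_7+n)-5$, as claimed in the first bullet.

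The trace is where the choice of exactly four summands enters. As $x\to\infty$ each $P_k/Q_k\to1$, so $Q/P\to4$; since $P$ is monic of degree $D:=\deg P$, the numerator $Q$ has leading coefficient $4$. Next, each $Q_k$ has second coefficient $1$ (here $a_k,b_k\ge2$ is used), so the monic product $\prod_kQ_k$ has second coefficient $4$; dividing by $(x-1)^3=x^3-3x^2+\cdots$ raises this by $3$, whence the coefficient of $x^{D-1}$ in $P$ equals $7$. Writing $f=x^2P-xQ-P$, the coefficient of $x^{D+1}$ is therefore $7-4=3$, so the trace of $f$ is $-3$. When $f$ is irreducible this is the trace of the Salem number, giving the first bullet; the cyclotomic (resp.\ quadratic) factors in the other two bullets absorb the discrepancy, which is why only the first bullet fixes the trace. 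The same bookkeeping with $r$ summands yields trace $1-r$, recovering the trace $-2$ case of \cite[Lemma~5]{MY} for $r=3$.

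The main obstacle is the factorisation control hidden in the appeal to \cite[Prop.~5(a)]{MS3}: one must know that, apart from a cyclotomic factor and the single exceptional degenerate quadratic, the interlacing pair $(P,Q)$ produces exactly one Salem factor, so that the trichotomy is genuinely exhaustive. Coupled to this is the need to verify that the coprimality of $P$ and $Q$ really does make $P$ equal to the full least common multiple $\prod_kQ_k/(x-1)^3$, since both the degree count and the $(x-1)^3$ appearing in the trace computation depend on there being no further cancellation.
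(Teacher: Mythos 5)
Your proposal is correct and follows essentially the same route as the paper, which gives no independent proof but states the lemma as a direct adaptation of \cite[Lemma~5]{MY}, resting---exactly as you do---on the additivity of circular interlacing \cite[Prop.~6]{MS3} and the Salem construction of \cite[Prop.~5]{MS3}. Your explicit bookkeeping (leading coefficient $4$ of $Q$, second coefficient $7$ of $P$, hence degree $n+p_1+\cdots+p_7-5$ and trace $-3$, with the other bullets absorbing the degenerate outcomes) fills in details the paper leaves implicit, and the two caveats you flag at the end are precisely the points the paper outsources to \cite{MS3,MY}.
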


	Lemma \ref{2009:lemma} does not require $p_1,\dots,p_7$ to be prime;
	however, this is a convenient choice and we will use it.
	Once these numbers are fixed, as $n$ varies we obtain an infinite family of polynomials.
	Below we will provide a list of finitely many families,
	corresponding to specific seven-tuples of primes $p_1,\ldots,p_7$,
	such that for all $n\geq 2$ coprime to $q=p_1\cdots p_7$,
	the polynomial $f$ defined in \eqref{def:sevenprimes} is always irreducible---hence Salem.
	Each family will give Salem numbers of trace $-3$ and sufficiently large even degrees that avoid
	$q-\varphi(q)$ residue classes modulo $q$.
	By using several families we make sure that every sufficiently large even degree is attained.

	\begin{prop}\label{prop:tuples}
		For each of the 21 seven-tuples of primes $(p_1,\ldots,p_7)$
		in Table \ref{table-tuples} and each integer $n\geq 2$ coprime to $p_1\cdots p_7$,
		the polynomial $f$ defined in \eqref{def:sevenprimes}
		is irreducible and Salem.
		Moreover, we obtain in this way Salem numbers of degree $72,76,78$ and every even degree $\geq 82$.
		Finally, the collection of seven-tuples in Table \ref{table-tuples} is minimal
		in the sense that any collection of tuples of the form $(2,3,p_3,\dots,p_7)$ with $p_i<29$
		misses infinitely many even degrees.
	\end{prop}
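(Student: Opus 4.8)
The plan is to establish the three assertions separately, in each case reducing an a priori infinite problem to a finite verification.

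For the first assertion, fix a tuple $(p_1,\dots,p_7)$, write $q=p_1\cdots p_7$ and $s=p_1+\cdots+p_7$, and denote by $f_n$ the polynomial of \eqref{def:sevenprimes}, of degree $N=n+s-5$. By Lemma~\ref{2009:lemma}, after removing a cyclotomic factor $f_n$ is either the minimal polynomial of a Salem number or a quadratic with a root in the closed unit disc. Since $N$ is large for every admissible $n\ge 2$, the quadratic alternative is impossible once the cyclotomic factor is stripped off; hence it suffices to prove that $f_n$ has no cyclotomic factor at all, for every $n\ge 2$ with $\gcd(n,q)=1$. In that case $f_n$ coincides with the minimal Salem polynomial produced by Lemma~\ref{2009:lemma}, so it is irreducible and Salem of degree $N$, as claimed.

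To detect cyclotomic factors I would evaluate $f_n$ at a root of unity $\zeta$ of order $k$. Using the telescoping identity $\frac{x^{a+b}-1}{(x^a-1)(x^b-1)}=1+\frac{1}{x^a-1}+\frac{1}{x^b-1}$ on each of the four summands defining $Q/P$, one gets the clean formula $\frac{Q(x)}{P(x)}=4+\sum_{i=1}^{7}\frac{1}{x^{p_i}-1}+\frac{1}{x^{n}-1}$. Since $f_n=(x^2-1)P-xQ$, at any $\zeta$ with $\zeta^{p_i}\neq 1$ and $\zeta^{n}\neq 1$ the equation $f_n(\zeta)=0$ becomes
\[
\frac{1}{\zeta^{n}-1}=\zeta-\zeta^{-1}-4-\sum_{i=1}^{7}\frac{1}{\zeta^{p_i}-1}=:c(\zeta),
\]
whose right-hand side is independent of $n$. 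Thus $\zeta^{n}$ is forced to equal the single value $1+1/c(\zeta)$, so for each $\zeta$ at most one residue class of $n$ modulo $k$ can make $f_n(\zeta)$ vanish; the degenerate cases $k\mid n$ and $k\in\{p_1,\dots,p_7\}$ are treated directly. The crux — and the step I expect to be the main obstacle — is to turn this into a finite problem: following the method of Lemma~5 in~\cite{MY}, one shows (via a resultant estimate on $c(\zeta)$) that $1+1/c(\zeta)$ is a $k$-th root of unity only for $k$ in an explicit finite set determined by $p_1,\dots,p_7$. A computer check of these finitely many $k$ against the condition $\gcd(n,q)=1$, carried out for each of the $21$ tuples, then rules out every cyclotomic factor and completes the first assertion.

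For the second assertion, the degree realised by the tuple $t$ and parameter $n$ is $D=s_t-5+n$, so $t$ attains every even $D\ge s_t-3$ with $\gcd(D-s_t+5,\,q_t)=1$ and misses exactly those $D$ with $D\equiv s_t-5\pmod p$ for some odd prime $p\mid q_t$; the prime $2$ (present in every tuple) never obstructs, since $D$ is even while $s_t-5$ is odd. Because the $q_t$ are squarefree, the set of degrees covered by the union of the $21$ families is periodic modulo $L=\operatorname{lcm}_t q_t$, so it is enough to check on the finitely many even residues modulo $L$ that each large even $D$ is hit by some tuple, together with the direct verifications that $72,76,78$ are attained and that coverage begins at $82$ — precisely the finite computation announced.

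Finally, a tuple $(2,3,p_3,\dots,p_7)$ with distinct $p_i<29$ must draw $p_3,\dots,p_7$ from $\{5,7,11,13,17,19,23\}$, so there are exactly $\binom{7}{5}=21$ such tuples and every admissible collection lies inside this fixed list. Applying the residue bookkeeping of the previous paragraph to all $21$ simultaneously, I would exhibit one even residue class modulo $L'=\operatorname{lcm}(3,5,7,11,13,17,19,23)$ on which every tuple is obstructed, i.e.\ for which each tuple $t$ satisfies $D\equiv s_t-5\pmod p$ for some odd prime $p\mid q_t$. Every even $D$ in that class is then missed by all tuples, producing an infinite arithmetic progression of unattained even degrees and showing that covering all even degrees forces the use of a prime $\ge 29$, as in Table~\ref{table-tuples}.
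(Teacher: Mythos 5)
Your reduction of the first assertion to ruling out cyclotomic factors of $f$ in \eqref{def:sevenprimes} is correct, and your telescoped equation $\frac{1}{\zeta^n-1}=c(\zeta)$ is exactly the right object: setting $y=x^n$, it is (away from the poles) precisely the curve $C(y,x)\colon (x^2-1)P(y,x)-xQ(y,x)=0$ that the paper works with. Your treatment of the second and third assertions (periodicity of the covered degrees modulo the product of the primes involved, a finite check of even residue classes, and exhibiting one obstructed class for the minimality claim) also matches the paper's proof. However, there is a genuine gap at exactly the point you flag as ``the main obstacle'': you assert that ``via a resultant estimate on $c(\zeta)$'' one can show that $1+1/c(\zeta)$ is a root of unity only for $\zeta$ of order $k$ in an explicit finite set, but no such estimate is given, and none is obvious. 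Observe that whenever a root of unity $\zeta$ (with $\zeta^{p_i}\neq 1$, $\zeta^n\neq 1$) is a root of $f$, the quantity $\zeta^n=1+1/c(\zeta)$ is \emph{automatically} a root of unity; so the finiteness statement you need is not an auxiliary estimate but is equivalent to the finiteness (and computability) of the set of cyclotomic points --- points with both coordinates roots of unity --- on the curve $\frac{1}{y-1}=c(x)$. That is the whole difficulty: for each fixed $k$ the condition couples two root-of-unity constraints, and there are infinitely many $k$ to consider, so a one-variable resultant computation cannot close the argument.

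The missing ingredient is Lemma~1 of Beukers--Smyth \cite{BS}: if $(y,x)$ is a cyclotomic point on $C$, then so is one of $(-y,-x)$, $(y^2,x^2)$, $(-y^2,-x^2)$. This is what converts the infinite problem into a finite one: intersecting $C$ with each of the transformed curves $C_1(y,x)=C(-y,-x)$, $C_2(y,x)=C(y^2,x^2)$, $C_3(y,x)=C(-y^2,-x^2)$ and eliminating one variable via resultants produces one-variable polynomials whose cyclotomic factors can be listed by a computer. The paper must then dispose of the factors that actually occur --- $\Phi_{p_i}$ (harmless since $\Phi_{p_i}\mid P$ and $\gcd(P,Q)=1$), and the pairs $\Phi_{12}/\Phi_4$ and $\Phi_{20}/\Phi_4$ for tuples beginning $(2,3,\ldots)$ and $(2,3,5,\ldots)$, which are ruled out using $\gcd(n,p_1\cdots p_7)=1$. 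This last step is roughly parallel to your ``degenerate cases,'' but it only becomes meaningful once the finite list of candidate cyclotomic points exists. Without \cite{BS} (or an equivalent theorem on cyclotomic points on curves), the conclusion of your first part is unsupported, and with it, the remainder of your outline would indeed go through as in the paper.
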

	\begin{table}[!ht]
		\begin{center}\vspace{-31pt}
			\begin{tabular}{p{0.9\textwidth}}
				\begin{multicols}{3}
					\begin{enumerate}[1.]
						\item (2, 3, 5, 7, 11, 13, 17)
						\item (2, 3, 5, 7, 11, 13, 19)
						\item (2, 3, 5, 7, 11, 13, 23)
						\item (2, 3, 5, 7, 11, 13, 29)
						\item (2, 3, 5, 7, 11, 17, 23)
						\item (2, 3, 5, 7, 11, 17, 29)
						\item (2, 3, 5, 7, 13, 17, 19)
						\item (2, 3, 5, 7, 13, 17, 29)
						\item (2, 3, 5, 7, 13, 19, 23)
						\item (2, 3, 5, 7, 13, 19, 29)
						\item (2, 3, 5, 7, 13, 23, 29)
						\item (2, 3, 5, 7, 17, 19, 29)
						\item (2, 3, 5, 11, 13, 17, 19)
						\item (2, 3, 5, 11, 13, 23, 29)
						\item (2, 3, 5, 11, 17, 19, 23)
						\item (2, 3, 5, 11, 19, 23, 29)
						\item (2, 3, 5, 13, 17, 19, 23)
						\item (2, 3, 7, 11, 13, 17, 19)
						\item (2, 3, 7, 11, 13, 19, 29)
						\item (2, 3, 7, 13, 17, 19, 23)
						\item (2, 3, 11, 17, 19, 23, 29)
					\end{enumerate}
				\end{multicols}
			\end{tabular}
		\end{center}
		\caption{Seven-tuples of primes used to construct infinite families of Salem numbers in Proposition~\ref{prop:tuples}.}\label{table-tuples}
	\end{table}
	\begin{proof}
		The proof goes along the same lines as Lemma 6 in \cite{MY}.
		In view of Lemma \ref{2009:lemma}, we need to rule out the possibility that
		$f$ has cyclotomic factors. To detect this, we introduce a new variable $y$
		and consider
		\[
		\frac{Q(y,x)}{P(y,x)}=
		\frac{x^{p_1+p_2}-1}{(x^{p_1}-1)(x^{p_2}-1)}
		+
		\frac{x^{p_3+p_4}-1}{(x^{p_3}-1)(x^{p_4}-1)}
		+
		\frac{x^{p_5+p_6}-1}{(x^{p_5}-1)(x^{p_6}-1)}
		+
		\frac{yx^{p_7}-1}{(x^{p_7}-1)(y-1)}.
		\]
		Define the curve $C(y,x):(x^2-1)P(y,x)-xQ(y,x)=0$,
		so that $C(x^n,x)$ gives us~(\ref{def:sevenprimes}).
		By~\cite[Lemma 1]{BS}, if $(y,x)$ is a cyclotomic point on $C$
		then so is either $(-y,-x)$, $(y^2,x^2)$ or $(-y^2,-x^2)$.
		Letting $C_1(y,x)=C(-y,-x)$, $C_2(y,x)=C(y^2,x^2)$ and $C_3(y,x)=C(-y^2,-x^2)$,
		it remains to confirm that for each tuple of primes from Table~\ref{table-tuples},
		we indeed do not have cyclotomic points. For each of the three pairs
		$(C, C_1)$, $(C,C_2)$, and $(C,C_3)$ we eliminate $x$ or $y$
		to obtain a single-variable polynomial. There are the following cases that we get:
		\begin{enumerate}[I.]
			\item After the elimination of either variable, we obtain an irreducible polynomial. 
			\item For $(p_1,\ldots,p_7)$, after elimination we find factors $\Phi_{p_i}$, which are permitted given that $\Phi_{p_i}$ divides $P(x),$ and $\gcd(P(x),Q(x))=1.$   
			\item For $(2,3,p_3,\ldots,p_7)$, after the elimination of $y$ we find a factor $\Phi_{12}$ and the elimination of $x$, a factor of $\Phi_4.$ If $(y, x)$ is cyclotomic point on both curves, then $y = x^n$ and $\exp( 2\pi i/4 ) = \exp ( 2\pi ikn/ 12 )$. Given that $\gcd(k, 12) = 1$, indicates  that $3$ divides $n$, which goes against the choice of $n$.
			\item For $(2,3,5,p_4,\ldots,p_7)$, after the elimination of $y$ we find a factor $\Phi_{20}$, and the elimination of $x$ is divisible by $\Phi_4.$ Similar to the case above, we conclude that $5$ divides $n$, which would contradict the choice of $n$.
		\end{enumerate}
		Since these are the only cases we find for all our 7-tuples—computed in PARI/G--we get these infinite families of Salem numbers of trace $-3$.
		
		To verify that all sufficiently large even degrees are obtained,
		we show that every residue class modulo $q=2\times 3\times\cdots\times 29$
		is obtained by selecting at least one of the tuples and one value of $n$
		of those allowed. Since $n+q$ is also allowed, we obtain the claim.
		The last part of the proposition follows from a similar check,
		this time showing that at least one
		residue class cannot be obtained if one uses only primes up to $23$.
		For more details, see~\cite[Lemma~6]{MY} and the PARI/GP code we used for the
		numerical verification of our statements~\cite{code}.
		The smallest degree Salem numbers we can construct as such are the ones
		corresponding to the first two 7-tuples in the table, which are degrees~$72,74$ and~$76$.
	\end{proof}

	Note that the list we provide is also minimal in the sense that
	omitting any of the tuples will result in uncovered residues
	modulo $2\times 3\times\cdots\times 29$.

	\subsection{Quadruples of primes}\label{S2.2}

	To deal with the degrees $54,56,60,62,64,66,68,70,74$ and $80$
	we use an alternative construction.
	For each quadruple $(p_1,p_2,p_3,p_4)$ from the list
	\vspace{-14pt}
	\begin{equation}\label{def:quadruples}
	\begin{tabular}{p{0.725\textwidth}}
	\setlength{\columnsep}{0pt}
	\begin{multicols}{5}
	(7,11,13,17)
	(7,11,13,19)
	(7,13,17,19)
	(7,11,13,23)
	(7,11,17,23)
	(7,11,19,23)
	(7,13,19,23)
	(7,11,17,29)
	(7,13,19,29)
	(7,11,19,37),
	\end{multicols}
	\end{tabular}
	\end{equation}
	we start from the rational function
	\[
	\frac{Q(x)}{P(x)}=
	\frac{x^8 + x^7 - x^5 -x^4 - x^3 + x + 1}{x^8 + 2x^7 + 2x^6 + x^5 - x^3 - 2x^2 - 2x - 1}
	+
	\frac{x^{p_1+p_2}-1}{(x^{p_1}-1)(x^{p_2}-1)}
	+
	\frac{x^{p_3+p_4}-1}{(x^{p_3}-1)(x^{p_4}-1)}
	\]
	and, similarly to \eqref{def:sevenprimes}, define again
	\begin{equation*}
	f(x) = (x^2-1)P(x)-x Q(x).
	\end{equation*}
	Note that the two degree-eight polynomials appearing in the above construction
	have been classified as a sporadic pair of interlacing polynomials
	(see~\cite[Table~2, Label~P]{MS2}).

	\begin{lemma}\label{2007:lemma1}
		Let $(p_1,p_2,p_3,p_4)$ be one of the quadruples in \eqref{def:quadruples}
		and let $f$ be as in \eqref{def:sevenprimes}. Then $f$ is the minimal polynomial
		of a Salem number of trace $-3$ and degree $p_1+p_2+p_3+p_4+6$.
	\end{lemma}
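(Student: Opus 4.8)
The plan is to verify the three hypotheses needed to apply the machinery developed for the seven-tuple case, suitably adapted to the present mixed construction, and then rule out cyclotomic contamination. First I would record that the fixed degree-eight pair is a genuine interlacing pair on the unit circle by the classification \cite[Table 2, Label P]{MS2}, and that each summand $\frac{x^{p_i+p_j}-1}{(x^{p_i}-1)(x^{p_j}-1)}$ is the interlacing quotient of the pair in \eqref{def:pq} with the gcd condition guaranteed by the pairwise coprimality of $7,11,13,\dots$ in each quadruple. Since circular interlacing is preserved under addition of pairs \cite[Prop.~6]{MS3}, the sum $\frac{Q}{P}$ again comes from a pair $(P,Q)$ satisfying the circular interlacing condition, so by \cite[Prop.~5(a)]{MS3} the polynomial $f=(x^2-1)P-xQ$ is a Salem polynomial up to a cyclotomic factor. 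The degree count is bookkeeping: each rational summand contributes its numerator and denominator degrees, the fixed pair contributes $8$, and the overall shift from \eqref{def:quadf} yields $\deg f = p_1+p_2+p_3+p_4+6$; I would also check the trace equals $-3$ by reading off the coefficient of $x^{\deg f -1}$ in $f$, which the fixed degree-eight block is precisely engineered to produce.

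The substantive step, exactly as in the proof of Proposition \ref{prop:tuples}, is to exclude cyclotomic factors so that $f$ is not merely Salem-up-to-cyclotomic but genuinely irreducible and Salem. Here, however, there is no free parameter $n$: the quadruples are a finite explicit list, so I expect this to be a direct finite verification rather than a curve-elimination argument. For each of the ten quadruples I would factor $f$ over $\mathbb{Z}[x]$ and confirm that the Salem factor carries the full degree $p_1+p_2+p_3+p_4+6$, with no cyclotomic cofactor surviving; equivalently, check $\gcd(P,Q)=1$ and that none of the roots of $f$ on the unit circle is a root of unity. Because the list is finite and the polynomials are completely explicit, this is a computation in PARI/GP entirely analogous to the one underlying Table \ref{table-tuples}, and I would cite \cite{code} for the verification.

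The main obstacle I anticipate is confirming that the output is the full minimal polynomial and not a cyclotomic multiple of a smaller Salem polynomial, since unlike the seven-tuple case the statement asserts irreducibility outright for every quadruple rather than leaving open the three-way alternative of Lemma \ref{2009:lemma}. The danger is a coincidental cyclotomic factor arising from the interaction between the sporadic degree-eight block (whose roots are constrained) and the cyclotomic pieces $\Phi_{p_i}$ dividing the denominators; ensuring these do not conspire to divide $f$ is what the finite check must rule out quadruple by quadruple. Once that is done, the degree and trace computations pin down that each $f$ is exactly the minimal polynomial of a Salem number of trace $-3$ and degree $p_1+p_2+p_3+p_4+6$, which over the ten quadruples in \eqref{def:quadruples} realises precisely the degrees $54,56,60,62,64,66,68,70,74,80$ claimed in the passage preceding the lemma.
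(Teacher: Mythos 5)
Your proposal is correct and ends up exactly where the paper does: since the ten quadruples form a finite explicit list, the paper's entire proof is a direct verification in PARI/GP that each resulting $f$ is irreducible, Salem, of trace $-3$ and the stated degree. The interlacing scaffolding you assemble beforehand (the sporadic pair from \cite[Table 2, Label P]{MS2}, preservation under addition, Salem-up-to-cyclotomic) is consistent with the paper's framework but is not needed once one commits to checking each of the ten polynomials explicitly, which is precisely what the paper does.
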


	\begin{proof}
		The lemma can be verified directly in PARI/GP.
	\end{proof}
	We obtain the stated degrees $54,56,60,\dots,70$, $74$ and $80$.

	\subsection{Existing small-degree Salem polynomials}
	It is known that the smallest possible degree is $32$~\cite{WWW}
	and examples for degrees $34, 36, 38$ and $40$ have been announced in \cite{EMRS,ERS}.
	Polynomials with these degrees are listed at the end of the paper for completeness.
	We thank Jean-Marc Sac-\'{E}p\'{e}e for sharing them with us.

	\subsection{Numerical search for missing degrees}\label{S2.4}
	We explain how we constructed Salem polynomials of degree $2d=42,44,\dots,52$ and $58$.
	It suffices to find totally positive irreducible polynomials $S(x)\in \mathbb{Z}[x]$
	for each degree $d$, such that $x^dS(x+\frac{1}{x}+2)$ is the minimal polynomial
	of a Salem number and the trace of $S(x)$ is $2d-3$.
	In other words, $S(x)$ must be irreducible with only
	real roots, of which exactly one lies in the interval $(4,\infty)$
	and all the others are contained in $(0,4)$.
	
	We briefly describe the implementation of the search process while omitting the heuristic that underlies it. This heuristic originates in~\cite[\S 2.1--2.2]{M} and in essence, proposes looking for the desired polynomial among those having a fixed (and small) resultant with a finite set of auxiliary polynomials. The search process begins by constructing the auxiliary polynomials, which are chosen to have appropriate properties. Specifically, for a fixed degree $d$ and trace $2d-3$, we search through families of monic irreducible polynomials
	with all roots real and positive, whose degrees add up to $d-1$.
	Thus, consider pairwise coprime monic polynomials
	$Q_1,\ldots,Q_r\in \mathbb{Z}[x]$ with $\deg Q_1+\cdots+ \deg Q_r=d-1$.
	For each $Q_i$, select $P_i\in\mathbb{Z}[x]$ satisfying
	\[
	\deg P_i<\deg Q_i
	\quad\text{and}\quad
	\mathrm{Res}(Q_i,P_i)=\pm 1.
	\]
	Applying the Chinese Remainder Theorem, we find a polynomial $P\in \mathbb{Z}[x]$
	that satisfies $P\equiv P_i\pmod{Q_i}$; specifically, $P$ is the solution
	modulo $Q=\prod^r_{i=1}Q_i$ and has degree $<d-1$.
	If $t$ is the trace of $Q=x^{d-1}-tx^{d-2}+\cdots$, we let
	\[
	\widetilde{P}=(x-(2d-3)+t)Q+P.
	\]
	Its degree is $d$ and its trace is $2d-3$, and we have that
	$\widetilde{P}\equiv P_i\pmod{Q_i}$, so that the resultant with $Q_i$ is again $\pm 1$.
	It only remains to check whether $\widetilde{P}$ is irreducible, has all real roots
	and corresponds to a Salem polynomial. If it does, we have found the desired polynomial.
	If not, we repeat the process using different polynomials $P_i$.

	For the choice of $Q_i$ we opted for the ``real'' counterparts of cyclotomic polynomials, i.e. such that
	\[
	x^{\deg Q_i}Q_i(x+\frac{1}{x}+2)=\Phi_m(x),
	\]
	where $\Phi_m$ is the minimal polynomials of $m$th root of unity.
	With this definition, $Q_i$ has integer coefficients, is irreducible
	and has only positive real roots (it generates the maximal
	totally real subfield of the cyclotomic field given by $\Phi_m$).
	Following the discussion in \cite[\S 2.2]{M}, we observe that if $\mathrm{Res}(Q_i,P_i)=\pm 1$,
	then $P_i$ is associated to a unit in the corresponding number field; that is, if $\zeta$
	is a root of $Q_i$, then $P_i(\zeta)$ is a unit in $\mathbb{Z}[\zeta]$.
	Therefore, for a given positive integer $M$, we compute a system of fundamental units
	$\varepsilon_1,\dots,\varepsilon_{d_i-1}$ in $\mathbb{Z}[\zeta]$ (abbreviate $d_i=\deg Q_i$)
	and loop through all the possible products
	\[
	u\, \varepsilon_1^{a_1} \varepsilon_2^{a_2} \cdots \varepsilon_{d_i-1}^{a_{d_i-1}},
	\]
	with $u=\pm 1$ and $|a_j|<M$.
	Each such product provides a polynomial $P_i$ that can be used in the algorithm
	described in the previous paragraph.
	Salem polynomials of degree $42,44,\dots,52$ were found using such algorithm,
	for several initial choices of auxiliary polynomials $Q_1,\dots,Q_r$
	and sufficiently large input value $M$.


	\clearpage
	
	\appendix
	\section*{Appendix}
	We include in Table~\ref{appendix:table} the coefficients of the sporadic Salem polynomials
	with trace $-3$ and degree $34\leq 2d\leq 52$ and $58$, which were not found using the families in
	Sections~\ref{S2.1} and~\ref{S2.2} and needed therefore a dedicated numerical search.
	For each polynomial
	\[
	x^{2d}+a_{2d-1}x^{2d-1}+\cdots+a_1x+a_0,
	\]
	we list only the coefficients $(a_{2d},\dots,a_{d})$;
	the others are obtained by the symmetry $a_k=a_{2d-k}$.
	The polynomials with degree $34\leq 2d\leq 40$ were announced in~\cite{EMRS} and~\cite{ERS}, and can be found in~\cite{S}.
	The remaining ones, with degree $42\leq 2d\leq 52$ and $58$, are new
	and were found with the method described in Section~\ref{S2.4}. 
	\begin{table}[!ht]
		\footnotesize
		\renewcommand\arraystretch{1.2}
		\begin{tabular}{c|>{$}l<{$}}
			\hline
			degree & \text{Salem polynomials of trace $-3$ and degree from 34 to 52 and 58}\\
			\hline
			34 & 1,3,2,-10,-40,-89,-149,-208,-257,-293,-315,-322,-311,\\
			&\hfill -281,-237,-191,-156,-143\\
			\hline
			36 & 1,3,2,-10,-39,-82,-124,-146,-136,-96,-40,12,45,55,50,43,43,48,51\\
			\hline
			38 & 1,3,0,-20,-63,-118,-161,-176,-173,-183,-228,-298,\\
			&\hfill -357,-376,-360,-345,-364,-419,-479,-505\\
			\hline
			40 & 1,3,0,-24,-90,-212,-384,-580,-766,-913,-998,-999,\\
			&\hfill -891,-655,-293,163,658,1127,1510,1760,1847\\
			\hline
			42 & 1,3,1,-18,-72,-177,-340,-561,-840,-1182,-1593,-2072,\\
			&\hfill -2604,-3165,-3732,-4289,-4822,-5312,-5731,-6050,-6248,-6315\\
			\hline
			44 & 1,3,4,1,-9,-28,-56,-92,-133,-175,-214,-247,-271,-284,\\
			&\hfill -284,-270,-243,-207,-167,-129,-98,-78,-71\\
			\hline
			46 & 1,3,2,-10,-41,-94,-163,-235,-295,-331,-336,-311,-264,\\
			&\hfill -207,-152,-109,-85,-87,-120,-183,-265,-347,-407,-429\\
			\hline
			48 & 1,3,0,-21,-72,-156,-264,-378,-478,-550,-588,-595,-582,\\
			&\hfill -564,-553,-555,-568,-585,-598,-601,-590,-566,-535,-508,-497\\
			\hline
			50 & 1,3,1,-16,-57,-121,-197,-276,-361,-464,-591,-735,-882,-1024,-1164,\\
			&\hfill -1309,-1458,-1603,-1739,-1870,-2001,-2126,-2226,-2286,-2310,-2315\\
			\hline
			52 & 1,3,2,-11,-45,-104,-183,-272,-362,-450,-537,-626,-717,-805,-880,\\
			&\hfill -932,-957,-963,-967,-985,-1021,-1064,-1096,-1106,-1097,-1082,-1075\\
			\hline
			58 & 1,3,1,-17,-62,-134,-218,-296,-362,-427,-505,-599,-698,-792,-884,\\
			&\hfill -986,-1098,-1200,-1268,-1304,-1343,-1423,-1545,\\
			&\hfill -1669,-1752,-1787,-1804,-1832,-1868,-1885\\
			\hline
		\end{tabular}
		\captionsetup{font=small}
		\vspace{6pt}
		\caption{Coefficients of Salem polynomials with trace $-3$ and degree $34\leq 2d\leq 52$ and $58$.}\label{appendix:table}

	\end{table}
			\vspace{-.5cm}
	\setlength{\parskip}{-0pt}
	
\end{document}